\theoremstyle{plain}
\newtheorem{thm}{Theorem}
\newtheorem{lem}[thm]{Lemma}
\newtheorem{cor}[thm]{Corollary}
\newtheorem*{MainLemma}{Main Lemma}
\newtheorem*{LovaszLemma}{Lovasz's Decomposition Lemma}
\theoremstyle{definition}
\newtheorem{defn}{Definition}
\theoremstyle{remark}
\newtheorem*{question}{Question}
\title{Destroying Non-Complete Regular Components in Graph Partitions}
\author{Landon Rabern\\
\small \texttt{landon.rabern@gmail.com}}
\begin{document}
\maketitle

\begin{abstract}
We prove that if $G$ is a graph and $r_1, \ldots, r_k \in \mathbb{Z}_{\geq 0}$ such that $\sum_{i=1}^k r_i \geq \Delta(G) + 2 - k$ then $V(G)$ can be partitioned into sets $V_1, \ldots, V_k$ such that $\Delta(G[V_i]) \leq r_i$ and $G[V_i]$ contains no non-complete $r_i$-regular components for each $1 \leq i \leq k$.  In particular, the vertex set of any graph $G$ can be partitioned into $\left \lceil \frac{\Delta(G) + 2}{3} \right \rceil$ sets, each of which induces a disjoint union of triangles and paths.
\end{abstract}

\section{Introduction}
In \cite{Kostochka} Kostochka modified an algorithm of Catlin to show that every triangle-free graph $G$ can be colored with at most $\frac{2}{3} \left (\Delta(G) + 3 \right)$ colors.  In fact, his modification proves that the vertex set of any triangle-free graph $G$ can be partitioned into $\left \lceil \frac{\Delta(G) + 2}{3} \right \rceil$ sets, each of which induces a disjoint union of paths. We generalize this as follows.

\begin{MainLemma}
Let $G$ be a graph and $r_1, \ldots, r_k \in \mathbb{Z}_{\geq 0}$ such that $\sum_{i=1}^k r_i \geq \Delta(G) + 2 - k$. Then $V(G)$ can be partitioned into sets $V_1, \ldots, V_k$ such that $\Delta(G[V_i]) \leq r_i$ and $G[V_i]$ contains no non-complete $r_i$-regular components for each $1 \leq i \leq k$.
\end{MainLemma}

Setting $k = \left \lceil \frac{\Delta(G) + 2}{3} \right \rceil$ and $r_i = 2$ for each $i$ gives a slightly more general form of Kostochka's theorem.

\begin{cor}
The vertex set of any graph $G$ can be partitioned into $\left \lceil \frac{\Delta(G) + 2}{3} \right \rceil$ sets, each of which induces a disjoint union of triangles and paths.
\end{cor}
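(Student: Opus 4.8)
The strategy is an extremal argument in the spirit of Lovász, Catlin, and Kostochka. The Corollary will fall out instantly once the Main Lemma is in hand: with $k=\left\lceil\frac{\Delta(G)+2}{3}\right\rceil$ and $r_i=2$ for all $i$ one has $\sum_i r_i=2k\ge\Delta(G)+2-k$, a graph of maximum degree at most $2$ is a disjoint union of paths and cycles, its $2$-regular components are exactly its cycles, and the only complete cycle is the triangle $K_3$. So the work is in proving the Main Lemma.

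I would set $s_i=r_i+1$, so the hypothesis reads $\sum_i s_i\ge\Delta(G)+2$ (one may also assume equality here, decreasing some positive $r_i$ by $1$ and applying induction otherwise). Among all partitions $\mathcal P=(V_1,\dots,V_k)$ of $V(G)$, take one minimizing the primary potential $\Phi_1(\mathcal P)=\sum_i e(G[V_i])/s_i$, and among those minimizing the secondary potential $\Phi_2(\mathcal P)$, the total number of vertices lying in a non-complete $r_i$-regular component of their own part. The first, routine step is that any $\Phi_1$-minimizer already satisfies $\Delta(G[V_i])\le r_i$ for every $i$: if $v\in V_i$ had $\deg_{G[V_i]}(v)\ge s_i$, then since $\sum_\ell\deg_{G[V_\ell]}(v)=\deg_G(v)\le\Delta(G)<\sum_\ell s_\ell$ while $\deg_{G[V_i]}(v)/s_i\ge 1$, some $j\ne i$ has $\deg_{G[V_j]}(v)/s_j<\deg_{G[V_i]}(v)/s_i$, and relocating $v$ to $V_j$ lowers $\Phi_1$ — impossible. (This uses only $\sum_i s_i\ge\Delta(G)+1$; the extra $1$ is what the rest of the argument spends.)

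Now assume $\Phi_2(\mathcal P)>0$ and fix a non-complete $r_i$-regular component $H$ of some $G[V_i]$, so $r_i\ge 2$, $|V(H)|\ge r_i+2\ge 4$, and $H$ is connected. The point of $\Phi_2$ is that $H$ is fragile: removing any single vertex $v\in V(H)$ from $V_i$ destroys $H$ and creates nothing bad inside $V_i\setminus\{v\}$, since each component of $H-v$ contains a former neighbour of $v$, now of degree $r_i-1$. So it suffices to relocate a vertex of $H$ to another part while (a) keeping $\Phi_1$ at its minimum, which preserves all degree bounds, and (b) not creating a fresh non-complete regular component in the receiving part. Using the full hypothesis $\sum_\ell s_\ell\ge\Delta(G)+2$, for $v\in V(H)$ we get $\sum_{j\ne i}\deg_{G[V_j]}(v)=\deg_G(v)-r_i\le\sum_{j\ne i}s_j-1$, so some $j\ne i$ has $\deg_{G[V_j]}(v)\le r_j$. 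If in fact $\deg_{G[V_j]}(v)\le r_j-1$, then after relocating $v$ it has degree below $r_j$ in its new part, so its new component is not $r_j$-regular: nothing bad is born, $H$ is destroyed, and $\Phi_2$ strictly decreases — a contradiction.

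The crux, which I expect to be the genuine obstacle, is that a vertex of a bad component is badly pinned. In a $\Phi_1$-minimizer one can show every $v\in V(H)$ has $\deg_{G[V_j]}(v)/s_j\ge r_i/s_i$ for all $j$ (else $\Phi_1$ drops), which forces $\deg_G(v)\ge\frac{r_i}{r_i+1}(\Delta(G)+2)$ and leaves essentially no room: the only $\Phi_1$-preserving relocations of $v$ either land it with degree $<r_j$ (and we win, as above) or land it with degree exactly $r_j$ (forcing $r_j=r_i$), in which case a new non-complete $r_i$-regular component $B\ni v$ may appear. One can at least record that such a $B$ again has $v$ at the centre of an induced path $x\!-\!v\!-\!y$ with $x\not\sim y$ — otherwise $v$'s neighbours in $B$ form a clique, each of whose vertices then has all its $B$-neighbours inside that clique, so the clique is an entire component and $B=K_{r_i+1}$, contradicting non-completeness. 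To finish one must therefore argue more globally: either chase the chain of such relocations of a single ``token'' vertex through the parts and show it cannot persist, by extracting from the forced tightness that some vertex of $G$ would have degree exceeding $\Delta(G)$; or relocate a larger piece of $H$ at once — in the extreme the whole vertex set $V(H)$, which is clean precisely because $H$ sends no edge out of $V_i$ — perhaps controlled by a finer tertiary potential such as the number of components, or the number of non-complete regular components, of the parts. Finding the right move together with the right monotone quantity in this final case is, I expect, the heart of the paper.
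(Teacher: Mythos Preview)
Your proposal is explicitly incomplete: you isolate the crux correctly --- relocating a vertex of a bad component may spawn a fresh bad component in the receiving part --- but you stop short of resolving it, and your closing paragraph is speculation, not proof. The paper carries out precisely the ``chain of relocations'' you intuit, but with two ingredients you do not supply. First, between the edge potential and the bad-component count it minimizes the total number of components $c(\mathcal P)=\sum_i c(G[V_i])$; this is what guarantees that a relocated vertex attaches to a \emph{single} component of its new part, so that ``the next bad component $A_{t+1}$'' in the chain is well-defined. Second, and decisively: starting from a bad $A_1$, one repeatedly moves a carefully chosen $x_t\in A_t$ (with $A_t-x_t$ connected and $x_t\not\sim x_{t-1}$) to produce $A_{t+1}$; by finiteness some $A_{t+1}-x_t$ coincides with an earlier $A_s-x_s$, and since both $x_s$ and $x_t$ extend this common piece to an $r_{i_s}$-regular graph, they share a neighbour $z$ in it of degree $r_{i_s}$. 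Rewinding to step $s$, moving out the intermediate $x_j$'s that lie in $V_{s,i_s}$ (an independent set, by minimality of $t$), and then moving $x_t$ in pushes $z$ to degree $r_{i_s}+1$ in its part --- contradicting minimality of the edge potential. This revisit-plus-common-neighbour argument is the missing idea.

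There is also a smaller but genuine flaw before the crux. Your weighted potential $\Phi_1=\sum_i e(G[V_i])/s_i$ misbehaves when the $r_i$ differ: if $v$ lies in a bad $r_i$-regular component and some $j$ has $\deg_{G[V_j]}(v)\le r_j-1$ with $r_j$ large, relocating $v$ to $V_j$ can strictly \emph{increase} $\Phi_1$ (e.g.\ $r_i=2$, $r_j=10$, $\deg_{G[V_j]}(v)=8$ gives $8/11-2/3>0$), so your ``immediate win'' move may forfeit $\Phi_1$-minimality and with it the degree bounds you rely on. The paper's potential $f(\mathcal P)=\sum_i\bigl(e(G[V_i])-r_i|V_i|\bigr)$ avoids this: moving $v$ from degree $r_i$ in $V_i$ to degree $d\le r_j$ in $V_j$ changes $f$ by $(d-r_j)-(r_i-r_i)=d-r_j\le 0$, so such a move is always available without leaving the set of $f$-minimizers. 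For the Corollary itself all $r_i=2$ and your $\Phi_1$ agrees with $f$ up to constants, so this does not bite there; but it does block your route to the Main Lemma in the generality you claim.
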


For coloring, this actually gives the bound $\chi(G) \leq 2  \left \lceil \frac{\Delta(G) + 2}{3} \right \rceil$ for triangle free graphs.  To get $\frac{2}{3} \left (\Delta(G) + 3 \right)$, just use $r_k = 0$ when $\Delta \equiv 2 (\text{mod } 3)$.\newline

Similarly, for any $r \geq 2$, setting $k = \left \lceil \frac{\Delta(G) + 2}{r + 1} \right \rceil$ and $r_i = r$ for each $i$ gives the following.
\begin{cor}
Fix $r \geq 2$.  The vertex set of any $K_{r + 1}$-free graph $G$ can be partitioned into $\left \lceil \frac{\Delta(G) + 2}{r + 1} \right \rceil$ sets each inducing an $(r-1)$-degenerate subgraph with maximum degree at most $r$.
\end{cor}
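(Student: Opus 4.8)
The plan is to prove the Main Lemma; Corollaries~1 and~2 then follow by the substitutions already indicated in the text (for Corollary~2 one uses the elementary fact that a connected graph of maximum degree at most $r$ with no $r$-regular component is $(r-1)$-degenerate, the $K_{r+1}$-freeness being exactly what also rules out a \emph{complete} $r$-regular component). The proof will be an extremal argument of the Catlin--Kostochka type. Since $\sum_i r_i \ge \Delta(G)+2-k$ forces $\sum_i (r_i+1)\ge \Delta(G)+1$, Lovász's Decomposition Lemma already guarantees that partitions $V_1,\dots,V_k$ of $V(G)$ with $\Delta(G[V_i])\le r_i$ for all $i$ exist; call these \emph{legal}, the whole point being to produce a legal partition that \emph{moreover} has no non-complete $r_i$-regular component. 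Among legal partitions I would fix one, $\mathcal{P}=(V_1,\dots,V_k)$, minimizing lexicographically the pair $\bigl(\sum_i e(G[V_i]),\ \beta(\mathcal{P})\bigr)$, where $\beta(\mathcal{P})$ is the number of vertices of $G$ lying in a non-complete $r_i$-regular component of their own part. The aim is to show $\beta(\mathcal{P})=0$; so suppose not, and let $C$ be a non-complete $r_i$-regular component of some $G[V_i]$.

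Two ingredients should drive the contradiction. The counting ingredient: every $v\in C$ has $d_{V_i}(v)=r_i$, so $d(v)\le\Delta(G)\le\sum_m r_m+k-2$ forces $\sum_{j\neq i}\bigl(d_{V_j}(v)-r_j\bigr)\le k-2<k-1$, whence some part $V_j$ with $j\neq i$ satisfies $d_{V_j}(v)\le r_j$, i.e.\ $v$ fits into $V_j$ without violating $V_j$'s own degree bound. This is precisely where the extra unit of slack in the hypothesis (the $\Delta(G)+2$ rather than $\Delta(G)+1$) is spent: under the weaker bound one only gets $\le k-1$, which need not furnish such a $V_j$. The structural ingredient: a connected non-complete $r_i$-regular graph is not a clique, so some $v\in C$ has two non-adjacent neighbours in $C$ (that is, $C$ contains an induced $P_3$ centred at $v$), which is the local configuration that makes rearranging $C$ worthwhile.

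The endgame is the exchange. Choose $v\in C$ with two non-adjacent neighbours and a receiving part $V_j$ from the count above, relocate $v$ from $V_i$ to $V_j$, and, if that single move does not already finish, follow it with a short Kempe-type chain of corrective moves (relocating in turn whatever vertex the previous move pushed over its bound). Deleting $v$ from $V_i$ turns the regular component $C$ into a strictly smaller non-regular piece, so $\beta$ strictly decreases while $\sum_i e(G[V_i])$ does not increase, contradicting the minimality of $\mathcal{P}$; hence $\beta(\mathcal{P})=0$ and the Main Lemma holds. The crux — and the step I expect to absorb most of the work — is to carry out this rearrangement \emph{legally}: the move(s) must not push a neighbour of $v$ in $V_j$ past its own degree bound (to be handled by using the slack to pick $v$ and $j$ so that $v$'s neighbours in $V_j$ are unsaturated), and must not coalesce a new non-complete $r_j$-regular component inside $V_j$ that would cancel the gain in $\beta$ (to be handled by exploiting the induced $P_3$ in $C$, so that the chain lowers the global amount of regularity rather than merely shuffling it between parts). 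Finally, Corollary~1 is the case $r_i=2$ for all $i$ with $k=\lceil(\Delta(G)+2)/3\rceil$, and Corollary~2 the case $r_i=r$ for all $i$ with $k=\lceil(\Delta(G)+2)/(r+1)\rceil$.
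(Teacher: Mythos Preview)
Your outline has the right shape --- an extremal argument over partitions with a primary potential and a tiebreaker --- but the ``endgame'' you yourself flag as the crux is where the whole proof lives, and the sketch does not supply it. Two concrete problems. First, $\sum_i e(G[V_i])$ is the wrong primary potential: moving $v$ from $V_i$ (where $d_{V_i}(v)=r_i$) to a part $V_j$ with $d_{V_j}(v)\le r_j$ changes it by $d_{V_j}(v)-r_i$, which can be positive when $r_j>r_i$, so your claim that the edge count ``does not increase'' is unjustified. The paper uses $f(P)=\sum_i\bigl(e(G[V_i])-r_i|V_i|\bigr)$ instead; under $f$ this move is never increasing, and equality forces $d_{V_j}(v)=r_j$. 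Second, that single move generically pushes neighbours of $v$ in $V_j$ above $r_j$, so you leave the class of legal partitions immediately; the ``short Kempe-type chain of corrective moves'' is precisely the hard part, and you give no termination argument and no reason the chain does not simply recreate a bad component elsewhere. The induced-$P_3$ observation is not enough structure to close this.

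The paper's mechanism is different in detail and does the real work. It minimizes $f$, then the total component count $c(P)$, then the number $p(P)$ of bad components, and it does not try to stay legal. Starting from a bad component it moves a carefully chosen max-degree vertex (one whose removal keeps the component connected and which is non-adjacent to the previously moved vertex) into another part; minimality of $c$ and $p$ then forces the moved vertex to attach to a single component that is again bad. Iterating yields a sequence of bad components, and finiteness forces a first repeat $A_{t+1}-x_t=A_s-x_s$. At that repeat the two entry vertices $x_s,x_t$ have a common max-degree neighbour $z$ in the repeated piece; one rewinds to stage $s$, evacuates the (pairwise non-adjacent) intermediate movers, slides $x_t$ back in, and finds $z$ now has degree $r_{i_s}+1$ in its part --- contradicting minimality of $f$. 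None of this iterative repeat-and-rewind structure appears in your plan, and it is not clear how the single move plus an unspecified chain could substitute for it.
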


For the purposes of coloring it is more economical to split off $\Delta + 2 - (r+1)\left \lfloor \frac{\Delta + 2}{r + 1} \right \rfloor$ parts with $r_j = 0$.

\begin{cor}
Fix $r \geq 2$.  The vertex set of any $K_{r + 1}$-free graph $G$ can be partitioned into $\left \lfloor \frac{\Delta(G) + 2}{r + 1} \right \rfloor$ sets each inducing an $(r-1)$-degenerate subgraph with maximum degree at most $r$ and $\Delta(G) + 2 - (r+1)\left \lfloor \frac{\Delta(G) + 2}{r + 1} \right \rfloor$ independent sets.  In particular, $\chi(G) \leq \Delta(G) + 2 - \left \lfloor \frac{\Delta(G) + 2}{r + 1} \right \rfloor$.
\end{cor}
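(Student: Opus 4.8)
The plan is to prove the Main Lemma by a minimal-counterexample argument and then obtain the final corollary in one line. First I would clear away the easy cases. If $G$ is disconnected, treat each component $H$ on its own: $\Delta(H)\le\Delta(G)$, so the same $r_1,\dots,r_k$ satisfy the hypothesis for $H$, and unioning the $i$-th parts over all components gives a partition of $G$. If every $r_i=0$, then $\sum_i r_i\ge\Delta(G)+2-k$ forces $k\ge\Delta(G)+2$, so a proper $\chi(G)$-coloring padded with empty classes works, and a single vertex is a complete $0$-regular component, so there is nothing to destroy. So take a counterexample $G$ with $|V(G)|$ minimum and, among these, with $\sum_i r_i$ minimum; then $G$ is connected and some $r_i\ge 1$. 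If $\sum_i r_i>\Delta(G)+2-k$, decrease a positive $r_i$ by one: the hypothesis survives, $\sum_i r_i$ drops, so a partition exists for the new tuple, and since $\Delta(G[V_i])\le r_i-1$ rules out every $r_i$-regular subgraph that partition also witnesses the original tuple, a contradiction. Hence $\sum_i(r_i+1)=\Delta(G)+2$: there is exactly one unit of slack beyond the threshold $\sum_i(r_i+1)\ge\Delta(G)+1$ of Lov\'asz's Decomposition Lemma, and spending that unit to destroy regular components is the whole point.

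Next I would invoke Lov\'asz's Decomposition Lemma to get a partition with $\Delta(G[V_i])\le r_i$ for every $i$ --- call such partitions \emph{admissible} --- and then fix an admissible partition $P=(V_1,\dots,V_k)$ minimizing $\Phi(P):=\sum_{i=1}^k e(G[V_i])$, breaking ties by minimizing the number of vertices lying in a non-complete $r_i$-regular component of some $G[V_i]$. Since $G$ is a counterexample while admissible partitions exist, $P$ still carries a non-complete $r_i$-regular component $C\subseteq V_i$, and $r_i\ge 2$ because $K_1$ and $K_2$ are complete. Minimality of $\Phi$ gives the usual shifting inequalities: for $v\in V_i$ and $j\ne i$, if moving $v$ to $V_j$ keeps the partition admissible then $d_{V_j}(v)\ge d_{V_i}(v)$. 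Applied to a vertex $v\in C$ (so $d_{V_i}(v)=r_i$) and to a part in which $v$ has no neighbor --- an always-admissible move --- this would give $0\ge r_i$; so every vertex of $C$ has a neighbor in every part, whence $\deg_G(v)\ge r_i+(k-1)$, and combined with $\deg_G(v)\le\Delta(G)=\sum_i(r_i+1)-2$ this controls how the neighbors of a vertex of $C$ are spread among the parts. It also means we may move a vertex of $C$ only to a part in which it has exactly $r_i$ neighbors: fewer makes the move inadmissible by the shifting inequality, more would increase $\Phi$.

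The heart of the proof is destroying $C$. Since $C$ is connected, $r_i$-regular and distinct from $K_{r_i+1}$, it contains vertices $x,y,z$ with $x\sim y\sim z$ and $x\not\sim z$, which may be chosen so that $C-\{x,z\}$ is connected (cut vertices of $C$ give a short separate case). The idea is to relocate $y$ --- or, when that is blocked, the non-adjacent pair $\{x,z\}$ --- out of $V_i$ while staying admissible and holding $\Phi$ at its minimum: this drops the degree of every affected neighbor inside $C$ below $r_i$, and since every component of what remains of $C$ contains such a vertex and no edge inside $V_i$ is created, $V_i$ no longer carries an $r_i$-regular component. Two things can still go wrong in a destination part $V_j$: (a) a neighbor of the relocated vertex is pushed to degree $r_j+1$, which I would repair by iterating the same kind of move on the offending vertex and arguing that the resulting chain terminates (it cannot reach back into $C$, and $\Phi$ never increases along it, so a move landing with too few neighbors would end it in a contradiction); and (b) a new non-complete $r_j$-regular component forms around the relocated vertex, which --- since it lands with exactly $r_i$ neighbors in $V_j$ --- can happen only when $r_j=r_i$. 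Breaking that near-collision is what the single unit of slack $\sum_i(r_i+1)=\Delta(G)+2$ buys, used together with the non-adjacency of $x$ and $z$: among the legal destinations for the relocated vertices one shows at least one does not let an $r_j$-regular component reassemble. In every case one reaches an admissible partition with $\Phi$ unchanged and strictly fewer vertices in non-complete regular components, contradicting the choice of $P$; hence no counterexample exists. The final corollary then follows by applying the Main Lemma with $r_i=r$ on $p:=\lfloor(\Delta(G)+2)/(r+1)\rfloor$ parts and $r_j=0$ on the other $\Delta(G)+2-(r+1)p$ parts --- the hypothesis holding with equality --- since in a $K_{r+1}$-free graph the absence of non-complete $r$-regular components upgrades to the absence of any $r$-regular component, so each of those parts induces an $(r-1)$-degenerate subgraph of maximum degree at most $r$, the $r_j=0$ parts are independent, and using $r$ colors on each part of the first kind and one color on each of the second gives $\chi(G)\le\Delta(G)+2-p$.

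I expect the real obstacle to be case (b): making sure that tearing the regular component $C$ out of $V_i$ cannot simply let a non-complete regular component reassemble in another part, and that the repair chain from (a) interlocks with the choice of destinations so that every case closes. This is precisely the step at which, in the setting of proper coloring, Catlin improved Lov\'asz's decomposition lemma and Kostochka pushed the bound through for triangle-free graphs, and it is why the hypothesis must read $\sum_i(r_i+1)\ge\Delta(G)+2$ rather than $\ge\Delta(G)+1$; making this bookkeeping go through is where the genuine work lies.
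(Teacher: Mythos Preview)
Your derivation of the corollary from the Main Lemma is exactly what the paper does: apply the Main Lemma with $p=\lfloor(\Delta+2)/(r+1)\rfloor$ parts carrying $r_i=r$ and $\Delta+2-(r+1)p$ parts carrying $r_j=0$; in a $K_{r+1}$-free graph no $r$-regular component can be complete, so the first kind of part is $(r-1)$-degenerate, and the chromatic bound follows. That part is fine.

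The gap is in your proof of the Main Lemma. Your potential $\Phi(P)=\sum_i e(G[V_i])$ is minimized only over \emph{admissible} partitions, so every time you move a vertex you must verify the result is still admissible --- and as you note in case~(a), it often is not, because a neighbour in the target part may be pushed to degree $r_j+1$. You propose to ``repair'' this by a chain of further moves, but you give no mechanism to terminate the chain, and your tiebreaker (number of vertices in bad components) does not control it. Case~(b), where a new non-complete regular component reassembles in the destination part, is left entirely open; you correctly identify it as the crux, but the local $x\sim y\sim z$, $x\not\sim z$ structure you set up does not by itself force a good destination to exist.

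The paper sidesteps both difficulties with two changes. First, it minimizes $f(P)=\sum_i\bigl(e(G[V_i])-r_i|V_i|\bigr)$ over \emph{all} partitions, so admissibility is a consequence of $f$-minimality rather than a constraint to be maintained; any $f$-preserving move automatically lands in an admissible partition, and your case~(a) disappears. Second, instead of a single local surgery it runs a sequential argument: from a bad component $A_1$ pick $x_1\in A_1^{r_{i_1}}$ with $A_1-x_1$ connected, move $x_1$ to a part where $f$ is preserved; the tiebreak by $c(P)$ (number of components) forces $x_1$ to land in a single component, and the tiebreak by $p(P)$ forces that component together with $x_1$ to be a new bad component $A_2$; now pick $x_2\in A_2^{r_{i_2}}$ non-adjacent to $x_1$ with $A_2-x_2$ connected, and iterate. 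Finiteness gives a first repeat $A_{t+1}-x_t=A_s-x_s$, and because $x_t$ and $x_s$ attach to the same vertex set with the same degree, they share a high-degree common neighbour $z$. Re-inserting $x_t$ then pushes $z$ over $r_{i_s}$, and moving $z$ out strictly decreases $f$ --- the contradiction. This ``move until you revisit'' device, together with the $c(P)$ tiebreaker, is precisely the missing idea that closes your cases~(a) and~(b) simultaneously.
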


For $r \geq 3$, the bound on the chromatic number is only interesting in that its proof does not rely on Brooks' theorem.  In \cite{Lovasz} Lov\'{a}sz proved a decomposition lemma of the same form as the Main Lemma.  The Main Lemma gives a more restrictive partition at the cost of replacing $\Delta(G) + 1$ with $\Delta(G) + 2$.

\begin{LovaszLemma}
Let $G$ be a graph and $r_1, \ldots, r_k \in \mathbb{Z}_{\geq 0}$ such that $\sum_{i=1}^k r_i \geq \Delta(G) + 1 - k$. Then $V(G)$ can be partitioned into sets $V_1, \ldots, V_k$ such that $\Delta(G[V_i]) \leq r_i$ for each $1 \leq i \leq k$.
\end{LovaszLemma}

For $r \geq 3$, combining this with Brooks' theorem gives the following better bound for a $K_{r + 1}$-free graph $G$ (first proved in \cite{BK}, \cite{CatlinFirst} and \cite{Lawrence}):

\[\chi(G) \leq \Delta(G) + 1 - \left \lfloor \frac{\Delta(G) + 1}{r + 1} \right \rfloor.\]

\section{The Proofs}
Instead of proving directly that we can destroy all non-complete $r$-regular components in the partition, we prove the theorem for the more general class of $r$-permissible graphs and show that non-complete $r$-regular graphs are $r$-permissible.

\begin{defn}
For a graph $G$ and $r \geq 0$, let $G^r$ be the subgraph of $G$ induced on the vertices of degree $r$ in $G$.
\end{defn}

\begin{defn}
Fix $r \geq 2$.  A collection $T$ of graphs is $r$-\emph{permissible} if it satisfies all of the following conditions.
\begin{enumerate}
\item Every $G \in T$ is connected.
\item $\Delta(G) = r$ for each $G \in T$.
\item $\delta(G^r) > 0$ for each $G \in T$.
\item If $G \in T$ and $x \in V(G^r)$, then $G - x \not \in T$.
\item If $G \in T$ and $x \in V(G^r)$, then there exists $y \in V(G^r) - N_G(x)$ such that $G - y$ is connected.
\item Let $G \in T$ and $x \in V(G^r)$. Put $H = G - x$.  Let $A \subseteq V(H)$ with $|A| = r$.  Let $y$ be some new vertex and form $H_A$ by joining $y$ to $A$.  If $H_A \in T$, then $A \cap N_G(x) \cap V(G^r) \neq \emptyset$.
\end{enumerate}

For $r = 0, 1$ the empty set is the only $r$-permissible collection.
\end{defn}

\begin{lem}
Fix $r \geq 2$ and let $T$ be the collection of all non-complete connected $r$-regular graphs.  Then $T$ is $r$-permissible.
\end{lem}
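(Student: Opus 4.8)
The plan is to verify, one at a time, the six defining conditions of $r$-permissibility for the collection $T$ of non-complete connected $r$-regular graphs with $r \ge 2$. The key simplification is that every $G \in T$ is $r$-regular, so $G^r = G$ and $V(G^r) = V(G)$; this makes conditions (1) and (2) immediate from the definition of $T$, and (3) reads $\delta(G^r) = \delta(G) = r \ge 2 > 0$.

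For (4): if $x \in V(G)$, then $x$ has $r \ge 1$ neighbours in $G$, each of which has degree $r - 1$ in $G - x$, so $G - x$ is not $r$-regular and hence $G - x \notin T$. For (6): put $H = G - x$, and note that for $v \in V(H)$ we have $\deg_H(v) = r$ if $v \notin N_G(x)$ and $\deg_H(v) = r - 1$ if $v \in N_G(x)$. If $H_A \in T$ then $H_A$ is $r$-regular; since adjoining $y$ raises exactly the degrees of the vertices of $A$ (each by one), regularity of $H_A$ forces $\deg_H(a) = r - 1$ for all $a \in A$ and $\deg_H(v) = r$ for all $v \in V(H) \setminus A$, which by the previous sentence says precisely $A = N_G(x)$. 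Then $A \cap N_G(x) \cap V(G^r) = N_G(x) \neq \emptyset$ because $r \ge 2$, so (6) holds.

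The substance is in (5): given $x \in V(G)$, we must find $y \in V(G) \setminus N_G(x)$ with $G - y$ connected. If $G - x$ is connected, take $y = x$ (note $x \notin N_G(x)$). Otherwise $x$ is a cut vertex, and I would argue via the block--cut tree of $G$, which has at least two leaves, each an endblock. Since $G$ is $r$-regular with $r \ge 2$, no endblock $B$ can be a single edge (its non-cut endpoint would have degree $1$), so every endblock is $2$-connected; moreover every vertex of $B$ other than its cut vertex $c_B$ is a non-cut vertex of $G$ all of whose edges lie inside $B$, forcing $|V(B)| \ge r + 1$. Now split into two cases. If some endblock $B$ has $x \notin V(B)$, then no vertex of $V(B) \setminus \{c_B\}$ is adjacent to $x$, this set is nonempty, and deleting any of its vertices keeps $G$ connected, so such a vertex works as $y$. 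If instead every endblock contains $x$, then every leaf of the block--cut tree is adjacent to the node $x$, which forces that tree to be a star centered at $x$ (in a tree, if every leaf is adjacent to a fixed vertex, the tree is a star with that center); hence $G$ is the union of endblocks $B_1, \ldots, B_d$ with $d \ge 2$ that pairwise meet only in $x$. Then $r = \deg_G(x) = \sum_{i=1}^d \deg_{B_i}(x)$ with each $\deg_{B_i}(x) \ge 2$, so $\deg_{B_1}(x) \le r - 2$, while $|V(B_1) \setminus \{x\}| \ge r$; thus some vertex of $V(B_1) \setminus \{x\}$ — a non-cut vertex of $G$ distinct from $x$ — is not adjacent to $x$ and serves as $y$.

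I expect the cut-vertex case of (5) to be the only real obstacle; everything else is bookkeeping with the definition of $T$ and $r$-regularity. The crux is the dichotomy on whether $x$ lies in every endblock, combined with the observation that an endblock of an $r$-regular graph is large (at least $r + 1$ vertices) and attached to the rest of $G$ only at its cut vertex, so that non-cut vertices of $G$ lying outside $N_G(x)$ are easy to locate.
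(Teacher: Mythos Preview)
Your proof is correct and follows the same strategy as the paper's: since $G^r = G$ for every $G \in T$, conditions (1)--(4) and (6) drop out of $r$-regularity, and (5) is handled via the block structure of $G$. Your argument for (5) is in fact more careful than the paper's two-sentence version, which asserts that a non-complete $G\in T$ ``has at least two end blocks'' and that one of them avoids $x$; you explicitly cover the $2$-connected case (by taking $y=x$) and the case where the cut vertex $x$ lies in every endblock, disposing of the latter with the count $|V(B_1)\setminus\{x\}|\ge r > r-2 \ge \deg_{B_1}(x)$.
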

\begin{proof}
Note that for $G \in T$ we have $G^r = G$.  Now (1), (2), (3) and (4) are clearly satisfied.  Since each $G \in T$ is non-complete, it has at least two end blocks.  Thus if $x \in V(G^r)$ we can always pick some $y$ such that $G-y$ is connected in an end block not containing $x$.  Hence (5) holds.  That (6) holds is immediate from regularity.  Hence $T$ is $r$-permissible.
\end{proof}

Now to prove the Main Lemma we just need to prove the following.

\begin{lem}
Let $G$ be a graph and $r_1, \ldots, r_k \in \mathbb{Z}_{\geq 0}$ such that $\sum_{i=1}^k r_i \geq \Delta(G) + 2 - k$. If $T_i$ is an $r_i$-permissible collection for each $1 \leq i \leq k$, then $V(G)$ can be partitioned into sets $V_1, \ldots, V_k$ such that $\Delta(G[V_i]) \leq r_i$ and $G[V_i]$ contains no element of $T_i$ as a component for each $1 \leq i \leq k$.
\end{lem}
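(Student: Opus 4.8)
The plan is an extremal argument in the spirit of Catlin and Kostochka. Rewrite the hypothesis as $\sum_{i=1}^{k}(r_i+1)\ge\Delta(G)+2$. Among all partitions $V_1,\dots,V_k$ of $V(G)$, choose one that first minimizes $\Phi:=\sum_{i=1}^{k}\frac{|E(G[V_i])|}{r_i+1}$ and then, subject to that, minimizes the number $b$ of pairs $(i,C)$ with $C$ a component of $G[V_i]$ lying in $T_i$. I will show $b=0$; the degree conditions will come out for free.

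Two consequences of $\Phi$-minimality do the easy work. Moving any $v\in V_i$ into another part $V_m$ is again a partition and so cannot decrease $\Phi$, giving $\frac{d_{G[V_m]}(v)}{r_m+1}\ge\frac{d_{G[V_i]}(v)}{r_i+1}$ for all $m$ (trivially for $m=i$); summing over $m$, $d_G(v)\ge\frac{d_{G[V_i]}(v)}{r_i+1}\sum_{m=1}^{k}(r_m+1)\ge\frac{d_{G[V_i]}(v)}{r_i+1}\,(\Delta(G)+2)$. If $d_{G[V_i]}(v)\ge r_i+1$ this forces $d_G(v)\ge\Delta(G)+2$, impossible; hence $\Delta(G[V_i])\le r_i$ for every $i$. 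If $d_{G[V_i]}(v)=r_i$ it forces $r_i\le\Delta(G)/2$, so any part with $r_i>\Delta(G)/2$ has $\Delta(G[V_i])<r_i$ and therefore no component in $T_i$; and when $r_i\in\{0,1\}$, $T_i=\emptyset$, so those parts are automatically fine.

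Now suppose $b>0$ and let $C\in T_i$ be a component of $G[V_i]$. By condition (3) we have $V(C^{r_i})\ne\emptyset$, and condition (5) then furnishes $x\in V(C^{r_i})$ with $C-x$ connected (in a later cascade step, $x$ is chosen non-adjacent to the vertex just moved into this part). Since $d_{G[V_i]}(x)=r_i$, a short count from the hypothesis, $\sum_{j\ne i}\big((r_j+1)-d_{G[V_j]}(x)\big)\ge(\Delta(G)+1-r_i)-(\Delta(G)-r_i)=1$, yields a part $V_j$ ($j\ne i$) with $d_{G[V_j]}(x)\le r_j$; together with $\frac{d_{G[V_j]}(x)}{r_j+1}\ge\frac{r_i}{r_i+1}$ from $\Phi$-minimality, one checks that $j$ may be chosen so that moving $x$ from $V_i$ to $V_j$ keeps the partition both $\Phi$-minimal and degree-feasible. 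After the move, in $V_i$ the component $C$ has been replaced by the connected graph $C-x$, which by condition (4) does not lie in $T_i$, while every other component of $G[V_i]$ is unchanged; in $V_j$ the only component that changed is the one now containing $x$, so at most one new bad component can appear there, and it contains $x$. If none appears, $b$ has strictly decreased with $\Phi$ still minimal, contradicting the choice of partition. Otherwise a bad component $C^{\ast}\in T_j$ containing $x$ appears, and we repeat the step on $C^{\ast}$, this time invoking condition (5) with the prescribed vertex $x$ and condition (6) to govern what happens when a vertex is later moved back into $V_j$ — or into any part already modified. This produces a cascade of $\Phi$-neutral, degree-feasible moves, each trading one bad component in its source part for one in its target part.

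The main obstacle is ruling out an infinite cascade — equivalently, that these ``trades'' cannot cycle. Conditions (5) and (6) are visibly built for exactly this: condition (6) says that moving a vertex into a part from which a degree-$r$ vertex $x$ was earlier deleted produces a bad component only if the incoming vertex meets $x$'s former degree-$r$ neighborhood, and condition (5) controls which vertex is deleted at each step; together they should force any step that re-enters an already-modified part to be a net reduction of $b$, and since a cascade that never revisits a part has length less than $k$, termination in a reduction of $b$ follows — the desired contradiction. Turning this into a clean argument, i.e. pinning down the right induction or monovariant that bounds the cascade's length, is the heart of the matter; by contrast, verifying that each move in the cascade leaves the partition $\Phi$-minimal and satisfying the degree conditions (so that the move is legitimate) is routine given the computations above.
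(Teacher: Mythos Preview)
Your weighted potential $\Phi=\sum_i |E(G[V_i])|/(r_i+1)$ is the wrong choice, and this is a genuine gap rather than a cosmetic one. The cascade only works if each move stays inside the class of $\Phi$-minimal partitions, so that the secondary minimization of $b$ still applies after the move; but with your $\Phi$, moving a vertex $x$ with $d_{G[V_i]}(x)=r_i$ from $V_i$ to $V_j$ is $\Phi$-neutral only when $d_{G[V_j]}(x)/(r_j+1)=r_i/(r_i+1)$, which in general has no integer solution when $r_i\ne r_j$. Concretely, with $k=2$, $r_1=2$, $r_2=3$, $\Delta(G)=5$, any $x\in V_1$ with $d_{G[V_1]}(x)=2$ in a $\Phi$-minimal partition is forced to have $d_{G[V_2]}(x)=3$, and moving it to $V_2$ raises $\Phi$ by $3/4-2/3=1/12$; your ``one checks'' is simply false here. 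The paper instead uses the unweighted $f(P)=\sum_i(|E(G[V_i])|-r_i|V_i|)$, designed so that moving a degree-$r_i$ vertex into a part where it has degree exactly $r_j$ is $f$-neutral. The paper also inserts a middle layer of minimization---the total number of components $c(P)$---before $p(P)$; you omit this, and it is what guarantees that each moved vertex lands in a single existing component of the target part.

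Your termination sketch is also off target. The cascade does not terminate because it runs out of parts---it can revisit the same part many times, hitting different components---so ``length $<k$'' is irrelevant. The paper tracks the \emph{components} $A_1,A_2,\ldots$ and uses finiteness of $G$ to find the first repeat $A_{t+1}-x_t=A_s-x_s$. Condition~(6) then yields a common neighbour $z\in N(x_s)\cap N(x_t)$ of degree $r_{i_s}$ in that component. After a clean-up (moving the intermediate vertices in $X=\{x_{s+1},\ldots,x_{t-1}\}\cap V_{s,i_s}$ out of $V_{s,i_s}$, which is legitimate because condition~(5) made each $x_j$ non-adjacent to $x_{j-1}$ and hence still of degree $\ge r_{i_s}$ there, and then moving $x_t$ in), the vertex $z$ has degree $r_{i_s}+1$ inside its part. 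Moving $z$ out now \emph{strictly decreases $f$}, contradicting minimality of $f$---not a reduction of $b$ as you propose. The monovariant you are searching for is $f$ itself, and the whole point of the cascade is to manufacture a single vertex whose removal drops $f$.
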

\begin{proof}
For a graph $H$, let $c(H)$ be the number of components in $H$ and let $p_i(H)$ be the number of components of $H$ that are members of $T_i$.  For a partition $P = (V_1, \ldots, V_k)$ of $V(G)$ let

\[f(P) = \sum_{i=1}^k \left(|E(G[V_i])| - r_i|V_i|\right),\]
\[c(P) = \sum_{i=1}^k c(G[V_i]),\]
\[p(P) = \sum_{i=1}^k p_i(G[V_i]).\]

Let $P = (V_1, \ldots, V_k)$ be a partition of $V(G)$ minimizing $f(P)$ and then $c(P)$ and then $p(P)$.\newline

Let $1 \leq i \leq k$ and $x \in V_i$ with $d_{G[V_i]}(x) \geq r_i$.  Since $\sum_{i=1}^k r_i \geq \Delta(G) + 1 - k$ there is some $j \neq i$ such that $d_{G[V_j]}(x) \leq r_j$.  Moving $x$ from $V_i$ to $V_j$ gives a new partition $P^*$ with $f(P^*) \leq f(P)$.  Note that if $d_{G[V_i]}(x) > r_i$ we would have $f(P^*) < f(P)$ contradicting the minimality of $P$. This proves that $\Delta(G[V_i]) \leq r_i$ for each $1 \leq i \leq k$.\newline

Now assume that for some $i_1$ there is $A_1 \in T_{i_1}$ which is a component of $G[V_{i_1}]$.  Plainly, we may assume that $r_{i_1} \geq 2$.  Put $P_1 = P$ and $V_{1,i} = V_i$ for $1 \leq i \leq k$. Take $x_1 \in V(A_1^{r_{i_1}})$ such that $A_1 - x_1$ is connected (this exists by condition (5) of $r$-permissibility).  By the above we have $i_2 \neq i_1$ such that moving $x_1$ from $V_{1, i_1}$ to $V_{1, i_2}$ gives a new partition $P_2 = (V_{2, 1}, V_{2,2}, \ldots, V_{2,k})$ such that $f(P_2) = f(P_1)$.  By the minimality of $c(P_1)$, $x_1$ is adjacent to only one component $C_2$ in $G[V_{2, i_2}]$. Let $A_2 = G[V(C_2) \cup \{x_1\}]$.  Since (by condition (4)) we destroyed a $T_{i_2}$ component when we moved $x_1$ out of $V_{1, i_1}$, by the minimality if $p(P_1)$, it must be that $A_2 \in T_{i_2}$. Now pick $x_2 \in A_2^{r_{i_2}}$ not adjacent to $x_1$ such that $A_2 - x_2$ is connected (again by condition (5)).  Continue on this way to construct sequences $i_1, i_2, \ldots$, $A_1, A_2, \ldots$, $P_1, P_2, P_3, \ldots$ and $x_1, x_2, \ldots$.  Since $G$ is finite, there is a smallest $t$ such that $A_{t + 1} - x_t = A_s - x_s$ for some $s < t$.  Put $B = A_s - x_s$. By condition (6) of $r_{i_s}$-permissibility, we have $z \in N_{B}(x_t) \cap N_{B}(x_s) \cap A_s^r$.\newline

We now modify $P_s$ to contradict the minimality of $f(P)$.  Consider the set $X = \{x_{s+1}, x_{s+2}, \ldots, x_{t-1}\} \cap V_{s, i_s}$.  For $x_j \in X$, since $x_j \in A_j^r$ and $x_j$ is not adjacent to $x_{j-1}$ we see that $d_{G[V_{s, i_s}]}(x_j) \geq r_{i_s}$.  Similarly, $d_{G[V_{s, i_t}]}(x_t) \geq r_{i_t}$. Also, by the minimality of $t$, $X$ induces an independent set in $G$.  Thus we may move all elements of $X$ out of $V_{s, i_s}$ to get a new partition $P^* = (V_{*, 1}, \ldots, V_{*, k})$ with $f(P^*) = f(P)$.  Since $x_t$ is adjacent to exactly $r_{i_s}$ vertices in $V_{t+1, i_s}$ and the only possible neighbors of $x_t$ that were moved out of $V_{s, i_s}$ between steps $s$ and $t+1$ are the elements of $X$, we see that $d_G[V_{*, i_s}](x_t) = r_{i_s}$.  Since $d_{G[V_{*, i_t}]}(x_t) \geq r_{i_t}$ we can move $x_t$ from $V_{*, i_t}$ to $V_{*, i_s}$ to get a new partition $P^{**} = (V_{**, 1}, \ldots, V_{**, k})$ with $f(P^{**}) = f(P^*)$.  Now, remember our vertex $z \in V_{**, i_s}$.  Since $z$ is adjacent to $x_t$ we have $d_{G[V_{**, i_s}]}(z) \geq r_{i_s} + 1$.  Thus we may move $z$ out of $V_{**, i_s}$ to get a new partition $P^{***}$ with $f(P^{***}) < f(P^{**}) = f(P)$.  This contradicts the minimality of $f(P)$.
\end{proof}

\begin{question}
Are there any other interesting $r$-permissible collections?
\end{question}

\begin{question}
The definition of $r$-permissibility can be weakened in various ways and the proof will still go through.  Does this yield anything interesting?
\end{question}

\section*{Acknowledgments}
Thanks to Dieter Gernert for finding and sending me a copy of Kostochka's paper.

\end{document}